\newcommand{\cA}{{\mathcal A}}
\newcommand{\cB}{{\mathcal B}}
\newcommand{\cE}{{\mathcal E}}
\newcommand{\cF}{{\mathcal F}}
\newcommand{\cH}{{\mathcal H}}
\newcommand{\cK}{{\mathcal K}}
\newcommand{\cN}{{\mathcal N}}
\newcommand{\cS}{{\mathcal S}}
\newcommand{\bC}{{\mathbb{C}}}
\newcommand{\bD}{{\mathbb D}}
\newtheorem{thm}{Theorem}[section]
\theoremstyle{definition}
\newtheorem{definition}[thm]{Definition}
\newtheorem{remark}[thm]{Remark}
\newcounter{tmp}
\numberwithin{equation}{section}
\begin{document}
	\title[Factorization]{Factorization of functions in the Schur-Agler class related to test functions}

	\author[Bhowmik]{Mainak Bhowmik}
	\address{Department of Mathematics\\
	Indian Institute of Science\\
	Bangalore 560012, India}
	\email{mainakb@iisc.ac.in}

	\author[Kumar]{Poornendu Kumar}
	\address{Department of Mathematics\\
	University of Manitoba, Winnipeg, Canada, R3T 2N2}
	\email{Poornendu.Kumar@umanitoba.ca}
	
	\thanks{2020 {\em Mathematics Subject Classification}: 47A48, 47A68, 47A56, 32A38.\\
		{\em Keywords and phrases}:  Colligation, Extreme points, Factorization, Realization formula, Schur-Agler class, Test functions}

	\maketitle
	\begin{abstract}
		We provide necessary and sufficient conditions for operator-valued functions on  arbitrary sets associated with a collection of test functions to have factorizations in several situations.  
	\end{abstract}
	\section{Introduction}
	A remarkable result in function theoretic operator theory says that a holomorphic function $\varphi:\mathbb{D}\rightarrow\overline{\mathbb{D}}$ has a {\em realization formula} $$\varphi(z)=A +zB(I-zD)^{-1}C $$ for an isometry $ U= \left[\begin{smallmatrix}
		A & B \\
		C & D 
	\end{smallmatrix}\right]$ on $\mathbb{C}\oplus\cH$ where $\cH$ is a Hilbert space determined by $\varphi$.

	 In general, on a domain $\Omega \subseteq \bC^d$, a holomorphic function $\theta$, taking values in $\cB(\cE)$ which is the $C^*$-algebra of bounded linear operators on some Hilbert space $\cE$, is said to be a {\em Schur class function} if $\|\theta(z) \| \leq 1$ for all $z$ in $\Omega$. The realization formula for the Schur class functions has been generalized on various domains such as an annulus \cite{DM}, the bidisc \cite{AM-Book}, the complex unit ball \cite{Ball-Bolo}, and the symmetrized bidisc \cite{SYM_Real, Tirtha-Hari-JFA}. Interestingly, not every Schur class function on the polydisc $\bD^d$ ($d\geq 3$) has a realization formula. However, a proper subclass, known as the {\em Schur-Agler class}, of the Schur class on $\bD^d$ does. See \cite{Agler}. It has been generalized to an abstract setting where the domain $\Omega$ is replaced by a set $X$, and the Schur class is substituted with a specific class of functions that depend on a collection of test functions $\Psi$ defined on $X$. This class is known as the {\em $\Psi$-Schur-Agler class}. We shall elaborate on this in \cref{REAL}.
	
	The realization formula is immensely powerful, giving rise to a wide array of results. To mention a few, it facilitates the derivation of the Pick-Nevanlinna interpolation \cite{AM-Book}, proves the commutant lifting theorem \cite{Ball}, and establishes the Caratheodory approximation result \cite{ABJK}. Furthermore, its utility extends to signal processing \cite{Gohberg} and electrical engineering \cite{Helton}. In this article we shall employ it for the purpose of factorization.
	
	By a {\em factorization} of a $\Psi$-Schur-Agler class  function $\theta$, we mean $\theta=\theta_1\theta_2$ for some $\theta_1$ and $\theta_2$ in the $\Psi$-Schur-Agler class. The factorization of classical Schur functions traces back to the pioneering work of Sz.-Nagy and Foias \cite{Nagy-Foias} and Brodskii \cite{Brod, Brodskii}, who investigated them to analyze invariant subspaces of specific operators, along with their relation with the  characteristic functions.  Notably, they established a one-to-one correspondence between the invariant subspaces of contractions and certain factorizations of the characteristic functions of contractions. Interested readers are encouraged to consult the book by Sz.-Nagy and Foias \cite[Chapters, 6 and 7]{Nagy-Foias} as well as a recent work of Curto, Hwang and Lee on shift-invariant subspaces \cite{Curto2}.
	
	Furthermore, this concept is intricately linked to extreme points. Forelli, in \cite{FF0}, proved that a function $f$ is an extreme point of the set of Herglotz class functions if and only if the inverse Cayley transform of $f$, which lies in the Schur class on $\mathbb{D}$, cannot be factorized. However, when we extend this inquiry to arbitrary domains, Forelli's subsequent work \cite{FF} showed that only one direction holds true. Specifically, Forelli's result asserts that under ceratin conditions on $\Omega$ {\em if $f$ is an extreme point of the set $\mathcal{N}(\Omega, p)$, then the inverse Cayley transform of $f$, belonging to the Schur class on $\Omega$, cannot be factorized}. Here, $\mathcal{N}(\Omega, p)$ represents the normalized Herglotz class of functions (see \cite{ Herglotz} for more details) defined as:
	$$\mathcal{N}(\Omega, p) = \{ f:\Omega\rightarrow\bC \text{ holomorphic with }  \operatorname{Re}f(z) > 0\ \text{for all}\ z\in \Omega\ \text{ and } f(p) = 1 \}.$$

	Therefore, it is natural to inquire about necessary and sufficient conditions for the Schur class functions to have a factorization. Understanding the factorization of such functions is quite challenging. However, the realization formula provides a promising avenue to unravel this complexity and determine when factorization is possible. There has been some work in these directions for the case of the disc, more generally the polydisc; please
	refer to \cite{Alpay, Brod, Brodskii,Ramlal-Jaydeb-1, Kal}. In this article, we present necessary and sufficient conditions on the blocks of the isometric colligation for operator-valued $\Psi$-Schur-Agler class functions on $X$ (endowed with a collection of test functions) to have factorizations in various scenarios which also generalize the previously known results in the operator-valued setting. Examples are given at the end. 
	
	\section{ The Realization formula} \label{REAL}
	 This section aims to provide a concise overview of test functions following \cite{DM, DMM}.
	A collection $\Psi$ of $\mathbb C$-valued functions on a set $X$ is called a set of {\em test functions} if the following conditions hold:
	\begin{enumerate}
		\item $\operatorname{sup}_{\psi\in \Psi} |\psi(x)|<1$ for all $x\in X$;
		\item for each finite subset $\Lambda$ of $X$, the collection $\{\psi|_{\Lambda}: \psi\in \Psi\}$ together with the constant functions generates the algebra of all $\mathbb C$-valued functions on $\Lambda$.
	\end{enumerate}
	The collection $\Psi$ inherits a subspace topology of the space of all bounded functions from $X$ to $\overline{\bD}$ endowed with the topology of point-wise convergence. We shall denote the algebra of bounded continuous functions over $\Psi$ with pointwise algebra operation by $C_b(\Psi)$. Define an injective mapping $E:X\rightarrow C_b(\Psi)$ as $E(x)= ev_x$, where $ev_x(\psi)=\psi(x)$ for $\psi \in \Psi$. Let $\cF$ be a Hilbert space. We say that a map $k: X\times X\rightarrow \mathcal{B}(C_b(\Psi), \mathcal{B}{(\cF}))$ is a {\em completely positive kernel} if the following holds:
	\begin{align}\label{Kernel}
		\sum_{i,j=1}^N T_j^*k(x_i, x_j)\left(\overline{f_j}f_i\right)T_i\geq 0
	\end{align}
	for all $x_1, \dots, x_N\in X $, $T_1, \dots, T_N\in \cB(\cF)$, $f_1, \dots, f_N\in C_b(\Psi)$ and $N \in \mathbb N$.
	
	A $\cB(\cF)$-valued kernel $S$ on $X$ is said to be {\em $\Psi$-admissible} if the map $M_\psi$, sending each element $h$ of the reproducing kernel Hilbert space $\cH_S $ to $\psi\cdot h$, is a contraction on $\cH_S$. Let $\mathcal{K}_{\Psi}(\cF)$ be the collection of all $\cB(\cF)$-valued $\Psi$-admissible kernels on $X$. For a Hilbert space $\cE$, we say that $f: X \rightarrow \cB(\cE)$ is in $H^\infty_{\Psi}(\cE)$ if there is a non-negative constant $C$ such that the $\cB(\cE \otimes \cE)$-valued function
	\begin{align}\label{HH}
		(x, y)\mapsto\left(C^2-f(y)^*f(x)\right)\otimes S(x,y)
	\end{align}
	to be a positive kernel for all $S$ in $\mathcal{K}_{\Psi}(\cE)$. If $f$ is in $H^\infty_{\Psi}(\cE)$, then we denote by $C_f$ the infimum of all such $C$ for \eqref{HH} is a positive kernel for all $S$ in $\mathcal{K}_{\Psi}(\cE)$. The collection of maps $f\in H^\infty_{\Psi}(\cE)$ for which
	$C_f$ is no larger than $1$ is called the {\em $\Psi-$Schur-Agler class} and is denoted by $\cS\cA_{\Psi}(\cE).$ We are ready to state the Realization formula in this context \cite{BBC, DM}.
	
	\setcounter{thm}{\thetmp}
	\begin{thm}\label{Real-thm}
		A function $f: X \to \cB(\cE)$ is in $ \cS\cA_{\Psi}(\cE)$ if and only if there exist a Hilbert space $\mathcal{H}$, a unital $*-$representation $\rho: C_b(\Psi) \to \cB(\cH)$ and an isometry
		\begin{align*}
			U=\begin{bmatrix}
				A & B\\
				C & D
			\end{bmatrix}
			:\begin{bmatrix}\cE\\\mathcal{H}\end{bmatrix}\to \begin{bmatrix}\cE\\\mathcal{H}\end{bmatrix}
		\end{align*}
		such that
		\begin{align}\label{Real-formula}
			f(x)= A+B \rho(E(x))(I-D \rho(E(x)))^{-1}C \ \text{for all}\ x\in X.
		\end{align}
	\end{thm}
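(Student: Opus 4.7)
The plan is to establish both directions of \cref{Real-thm} separately: the forward direction (sufficiency) by a direct block computation exploiting the isometry relations, and the reverse direction (necessity) through an Agler-type decomposition, a Kolmogorov/Stinespring-type representation, and a lurking-isometry construction.

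For \emph{sufficiency}, assume $f$ is given by \eqref{Real-formula}. Since $\|\rho(E(x))\| \leq \sup_{\psi\in\Psi}|\psi(x)| < 1$ and $\|D\| \leq 1$ (from $U^*U=I$), the Neumann series for $(I - D\rho(E(x)))^{-1}$ converges, so $G(x) := (I - D\rho(E(x)))^{-1}C$ is well-defined. A direct block calculation based on the identity
\begin{align*}
\begin{bmatrix} f(x) \\ G(x) \end{bmatrix} = U \begin{bmatrix} I_\cE \\ \rho(E(x))G(x) \end{bmatrix},
\end{align*}
together with $U^*U=I$, yields
\begin{align*}
I_\cE - f(y)^*f(x) = G(y)^*\bigl(I_\cH - \rho(E(y))^*\rho(E(x))\bigr)G(x) = G(y)^*\rho\bigl(1 - \overline{E(y)}E(x)\bigr)G(x),
\end{align*}
using that $\rho$ is a unital $*$-homomorphism. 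Tensoring this with an arbitrary $S\in\cK_\Psi(\cE)$, the $\Psi$-admissibility of $S$ (which makes $(1-\overline{\psi(y)}\psi(x))S(x,y)$ a positive kernel for each $\psi\in\Psi$), transferred through the representation $\rho$, shows that the kernel in \eqref{HH} is positive with $C=1$, so $f\in\cS\cA_\Psi(\cE)$.

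For \emph{necessity}, the first step is to extract an Agler-type decomposition. From the hypothesis that $(I - f(y)^*f(x))\otimes S(x,y)$ is a positive kernel for every $S\in\cK_\Psi(\cE)$, a Hahn--Banach cone-separation argument in the space of $\cB(\cE)$-valued kernels yields a completely positive kernel $\Gamma : X\times X \to \cB(C_b(\Psi), \cB(\cE))$ satisfying
\begin{align*}
I_\cE - f(y)^*f(x) = \Gamma(x,y)\bigl(1 - \overline{E(y)}E(x)\bigr).
\end{align*}
A Kolmogorov/Stinespring-type construction then represents $\Gamma$ as $\Gamma(x,y)(\varphi) = L(y)^*\rho(\varphi)L(x)$ for a Hilbert space $\cH$, a unital $*$-representation $\rho : C_b(\Psi)\to\cB(\cH)$, and operators $L(x):\cE\to\cH$. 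Substituting into the Agler identity and rearranging gives
\begin{align*}
f(y)^*f(x) + L(y)^*\rho(E(y))^*\rho(E(x))L(x) = I_\cE + L(y)^*L(x),
\end{align*}
which asserts precisely that the linear map $V_0 : (e,\rho(E(x))L(x)e)\mapsto (f(x)e, L(x)e)$ is isometric on the span of its domain in $\cE\oplus\cH$. Extending $V_0$ to an isometry $U=\sbm{A & B\\ C & D}$ on all of $\cE\oplus\cH$ (possibly enlarging $\cH$ and extending $\rho$ accordingly so it remains unital), then reading off the defining equations, yields $L(x) = (I - D\rho(E(x)))^{-1}C$ and $f(x) = A + B\rho(E(x))L(x)$, which is \eqref{Real-formula}.

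The main obstacle is the very first step of the necessity direction: the cone-separation argument that produces the CP kernel $\Gamma$. One must identify the correct topological vector space, the correct positive cone (encoding the dependence on \emph{all} $\Psi$-admissible $S$ simultaneously), and then upgrade a separating linear functional to a completely positive kernel with values in $\cB(C_b(\Psi), \cB(\cE))$ whose action is intertwined by a $*$-representation of $C_b(\Psi)$. Once this decomposition is in hand, the Stinespring representation of $\Gamma$ and the lurking-isometry dilation are by now standard manoeuvres, the only care being that $\rho$ remain a unital $*$-representation throughout.
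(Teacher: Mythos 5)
The paper does not prove this theorem at all: it is quoted from the cited sources \cite{BBC, DM}, where the argument is exactly the one you outline --- the lurking-isometry/block computation giving $I-f(y)^*f(x)=G(y)^*\rho\bigl(1-\overline{E(y)}E(x)\bigr)G(x)$ for sufficiency, and the cone-separation (Agler decomposition) plus Kolmogorov representation plus lurking-isometry extension for necessity. Your sketch is a faithful outline of that standard proof, with the genuinely hard step (producing the completely positive kernel $\Gamma$ by a Hahn--Banach separation of cones of kernels, and verifying that admissibility of $S$ transfers through $\rho$) correctly identified but, as you acknowledge, not carried out.
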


	\section{Main Results} 
	 In this section, we find necessary and sufficient conditions for $\theta$ to have a factorization. We shall assume that there exists a point $x_0 \in X$ such that $E(x_0)=0$ in $C_b(\Psi)$. In fact, in \cite{BBC}, it has been shown that if $\Psi$ consists of holomorphic test functions on a domain $\Omega$ in $\bC^d$ and $z_0 \in \Omega$ then we can find another collection of holomorphic test functions $\Theta$ such that $\varphi(z_0)=0$ for each $\varphi \in \Theta$ and $\cK_\Psi(\cE) = \cK_\Theta(\cE)$. This suggests that whenever we have holomorphic test functions on $\Omega$ we can assume that there is a point $z_0\in \Omega$ such that $E(z_0)=0$.
	The results of this section are motivated by \cite{Ramlal-Jaydeb-1}. 
	\begin{definition}
		Given two Hilbert spaces $\cH_1$ and $\cH_2$, a unital $*$-representation $\rho : C_b(\Psi) \rightarrow \cB(\cH_1 \oplus \cH_2)$ is said to be reducible if $$\rho(g)(\cH_j) \subseteq \cH_j  \ \text{for}\  j=1,2\ \text{and}\ g \in C_b(\Psi).$$

	\end{definition}
	
	First, we consider the case when $\theta$ vanishes at $x_0$ and one of its factors is a self adjoint invertible operator at $x_0$. 
	\begin{thm}\label{Main1}
		Let $\theta \in \cS\cA_{\Psi}(\cE)$ be such that $\theta(x_0) = 0$. Then $\theta = \psi_1 \psi_2$ with $\psi_2(x_0)=A$, a self adjoint invertible operator on $\cE$, for some $\psi_1,\psi_2 \in \cS\cA_{\Psi}(\cH) $ if and only if there exist Hilbert spaces $\cH_1,\cH_2$, a reducible unital $*-$representation $ \rho :C_b(\Psi) \to \cB \left(\cH_1 \oplus \cH_2\right)$ and an isometric colligation,
		$$ U=  \left[\begin{array}{c|c c} 
			0 & B_1 & 0 \\ 
			\hline 
			C_1 & D_1 & D_2 \\
			C_2 & 0 & D_3
		\end{array}\right]  \,\,:\cE \oplus (\cH_1 \oplus \cH_2) \to \cE \oplus (\cH_1 \oplus \cH_2) $$
		with \begin{align}\label{cond_1}
			C_1 A^{-2}C_1^* D_2 = D_2, \,\, C_1^*C_1 = A^2
		\end{align} 
		such that $\theta$ is of the form \eqref{Real-formula}, where $$ B=\begin{bmatrix}
			B_1 & 0 
		\end{bmatrix},\,\, C= \begin{bmatrix}
			C_1\\
			C_2
		\end{bmatrix} ,\,\, D= \begin{bmatrix}
			D_1 & D_2 \\
			0 & D_3
		\end{bmatrix}.$$  
		
	\end{thm}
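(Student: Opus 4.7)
\bigskip
\noindent\textbf{Proof proposal.}

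\smallskip
The plan is to handle the two directions by converting between (i) a single realization of $\theta$ of the prescribed block form and (ii) a pair of realizations of $\psi_1,\psi_2$, using the standard product construction for transfer functions. The hypothesis $E(x_0)=0$ lets us read off the value at $x_0$ directly from the ``$A$-block'' of any realization.

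\smallskip
\emph{Sufficiency.} Given the isometric colligation in the statement, I would define candidate factors via a block decomposition of the resolvent. Since $\rho$ is reducible, writing $\rho=\rho_1\oplus\rho_2$ and denoting $\rho_j(E(x))$ by $\rho_j$, the upper-triangular shape of $D$ gives a closed form for $(I-D\rho)^{-1}$. Expanding $B\rho(I-D\rho)^{-1}C$ with $B=[B_1\ 0]$ and $C=[C_1;\,C_2]^{\top}$ shows that
\begin{align*}
\theta(x)=B_1\rho_1(I-D_1\rho_1)^{-1}C_1+B_1\rho_1(I-D_1\rho_1)^{-1}D_2\rho_2(I-D_3\rho_2)^{-1}C_2.
\end{align*}
This suggests the choice
\begin{align*}
\psi_1(x)&:=B_1\rho_1(E(x))\bigl(I-D_1\rho_1(E(x))\bigr)^{-1}C_1A^{-1},\\
\psi_2(x)&:=A+A^{-1}C_1^{*}D_2\,\rho_2(E(x))\bigl(I-D_3\rho_2(E(x))\bigr)^{-1}C_2.
\end{align*}
Using the conditions $C_1^{*}C_1=A^{2}$ and $C_1A^{-2}C_1^{*}D_2=D_2$, together with the off-diagonal relations from $U^{*}U=I$ (e.g.\ $C_1^{*}D_1=0$, $C_1^{*}D_2+C_2^{*}D_3=0$), I would verify that the smaller colligations
\[
U_1=\begin{bmatrix}0 & B_1\\ C_1A^{-1}&D_1\end{bmatrix},\qquad U_2=\begin{bmatrix}A & A^{-1}C_1^{*}D_2\\ C_2 & D_3\end{bmatrix}
\]
are isometries on $\cE\oplus\cH_1$ and $\cE\oplus\cH_2$ respectively. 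By \cref{Real-thm} applied with the representations $\rho_1,\rho_2$, this yields $\psi_1,\psi_2\in\cS\cA_\Psi(\cE)$; multiplying out $\psi_1\psi_2$ and invoking $C_1A^{-1}\cdot A^{-1}C_1^{*}D_2=D_2$ recovers $\theta$, and $\psi_2(x_0)=A$ since $E(x_0)=0$.

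\smallskip
\emph{Necessity.} Suppose $\theta=\psi_1\psi_2$ with $\psi_2(x_0)=A$ invertible and self-adjoint; then $\psi_1(x_0)=0$. Apply \cref{Real-thm} to each factor to obtain isometric colligations $\tilde U_j$ with representations $\rho_j$ on $\cE\oplus\cH_j$, whose $A$-blocks are $0$ and $A$, respectively. Taking $\rho:=\rho_1\oplus\rho_2$ (automatically reducible) and defining
\[
B_1:=\tilde B_1,\quad C_1:=\tilde C_1 A,\quad C_2:=\tilde C_2,\quad D_1:=\tilde D_1,\quad D_2:=\tilde C_1\tilde B_2,\quad D_3:=\tilde D_2,
\]
produces the block matrix $U$ displayed in the theorem. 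A direct block-by-block computation of $U^{*}U$ reduces to the isometry identities for $\tilde U_1$ and $\tilde U_2$; in particular $\tilde C_1^{*}\tilde C_1=I$ (because $\tilde U_1$ has zero top-left block), so $C_1^{*}C_1=A\tilde C_1^{*}\tilde C_1 A=A^{2}$, and consequently $C_1A^{-2}C_1^{*}D_2=\tilde C_1\tilde C_1^{*}\tilde C_1\tilde B_2=\tilde C_1\tilde B_2=D_2$. The same decomposition of the resolvent used in the other direction shows $\theta$ is realized by this $U$.

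\smallskip
\emph{Main obstacle.} The bookkeeping in verifying $U^{*}U=I$ and checking that the candidate colligations $U_1,U_2$ are isometric is where all the algebraic content sits; the two constraints in \eqref{cond_1} are precisely what is needed to match the cross terms and to recover $D_2$ from the product. Keeping straight which block relations come from $U$, which from $\tilde U_1,\tilde U_2$, and ensuring the invertibility/self-adjointness of $A$ is used consistently (so $A^{-1}$ exists and $A^{*}=A$) is the technical crux.
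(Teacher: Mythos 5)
Your proposal is correct and follows essentially the same route as the paper's proof: in both directions you use the product of two inflated colligations, the same choice $B_2=A^{-1}C_1^*D_2$ (resp.\ $C_1=\tilde C_1A$, $D_2=\tilde C_1\tilde B_2$), and the same way of extracting the identities \eqref{cond_1} from $C_1^*C_1=I$ for the first factor's colligation. The verifications you defer (that $U_1^*U_1=I$ and $U_2^*U_2=I$ follow from $U^*U=I$ together with \eqref{cond_1} and $A=A^*$) go through exactly as you indicate.
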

	\begin{proof}
		Suppose $\theta = \psi_1\psi_2$ and $\psi_2(x_0) = A$, where $A$ is self adjoint and invertible. Then $\psi_1(x_0)=0$. Now $\psi_1$ being in $\cS\cA_{\Psi}(\cE)$, by \cref{Real-thm}, there exist a unital $*-$representation $\rho_1: C_b(\Psi) \to \cB(\cH_1)$ and an isometric colligation $$ U_1 =  \left[\begin{array}{c|c} 
			0 & B_1 \\ 
			\hline \vspace{2mm}
			C_1 & D_1
		\end{array}\right] $$ such that $$\psi_1(x) = B_1 \rho(E(x))(I-D_1 \rho(E(x)))^{-1}C_1 $$ for all $x$ in $X$. Similarly, for $\psi_2$, there  exist a unital $*-$representation of $C_b(\Psi)$, $\rho_2$ (say) on a Hilbert space $\cH_2$ and an isometric colligation 
		$$ U_2 =\left[\begin{array}{c|c} 
			A & B_2 \\ 
			\hline \vspace{1mm}
			C_2 & D_2
		\end{array}\right] $$ such that $$ \psi_2(x) = A+B_2 \rho_2 (E(x))(I-D_2\rho_2(E(x)))^{-1}C_2.$$
		
		Now we define, a unital $*-$representation $\rho$ of $C_b(\Psi)$ on $\cH_1 \oplus \cH_2$ in the following way,
		$$ \rho (g) : =\begin{bmatrix}
			\rho_1(g) & 0\\
			0 & \rho_2(g) 
		\end{bmatrix} $$ for each $g\in C_b(\Psi)$. 
		Clearly, $\rho$ is a unital $*-$representation such that $\rho(\cH_j)\subseteq \cH_j$ for $j=1,2.$  So,
		$$ \rho(E(x))= \begin{bmatrix}
			\rho_1(E(x)) & 0\\
			0 & \rho_2(E(x))
		\end{bmatrix}. $$
		Set,
		$$ U=  \left[\begin{array}{c|c c} 
			0 & B_1 & 0 \\ 
			\hline
			C_1 & D_1 & 0 \\
			0 & 0 & I_{\cH_2}
		\end{array}\right] 
		\left[\begin{array}{c|c c} 
			A & 0 & B_2 \\ 
			\hline 
			0 & I_{\cH_1} & 0 \\
			C_2 & 0 &  D_2
		\end{array}\right] 
		= 
		\left[\begin{array}{c|c c} 
			0 & B_1 & 0 \\ 
			\hline 
			C_1 A & D_1 & C_1 B_2 \\
			C_2 & 0 & D_2
		\end{array}\right].
		$$
		
		\quad

		Since $U_1$ and $U_2$ are isometries, $U$ is an isometry.
		Let    
		\begin{align*}
			f(x) &= \begin{bmatrix}
				B_1 & 0 
			\end{bmatrix}  \left[\begin{smallmatrix}
				\rho_1(E(x)) & 0\\
				0 & \rho_2(E(x))
			\end{smallmatrix}\right] \left(I_{\cH_1 \oplus \cH_2} -\left[ \begin{smallmatrix}
				D_1 & C_1 B_2 \\
				0 & D_2
			\end{smallmatrix}\right] \left[ \begin{smallmatrix}
				\rho_1(E(x)) & 0\\
				0 & \rho_2(E(x))
			\end{smallmatrix}\right] \right) ^{-1} \begin{bmatrix}
				C_1 A\\
				C_2
			\end{bmatrix} \\
			&= \begin{bmatrix}
				B_1\rho_1 (E(x)) & 0 
			\end{bmatrix}  \begin{bmatrix}
				\left ( I_{\cH_1} - D_1\rho_1(E(x)) \right )^{-1} & Z \\
				0 & \left ( I_{\cH_2} - D_2\rho_2(E(x)) \right )^{-1}
			\end{bmatrix}  \begin{bmatrix}
				C_1 A\\
				C_2
			\end{bmatrix} 
		\end{align*} 
		where $$ Z= \left (I_{\cH_1} - D_1\rho_1(E(x))\right )^{-1} C_1B_2\rho_2(E(x)) \left(I_{\cH_2} - D_2 \rho_2(E(x)) \right)^{-1}.$$ A straightforward calculation gives
		\begin{align*}
			f(x) &=\left[ B_1\rho_1(E(x)) \left( I_{\cH_1} -D_1\rho_1(E(x))\right)^{-1} C_1 A\right ] +\\
			& \quad\quad\quad \quad \quad \left [ B_1 \rho_1(E(x)) \left( I_{\cH_1} -D_1 \rho_1(E(x)) \right)^{-1} C_1 B_2 \rho_2(E(x)) \left ( I_{\cH_2} - D_2\rho_2(E(x)) \right )^{-1} C_2 \right ] \\
			&= \psi_1(x)\psi_2(x)\\
			&= \theta(x).
		\end{align*}
		Suppose we write the isometry $$ U =  \left[\begin{array}{c|c c} 
			0 & \tilde{B_1} & 0 \\ 
			\hline 
			\tilde{C_1} & \tilde{D_1} & \tilde { D_2} \\
			\tilde{C_2} & 0 &\tilde{D_3}
		\end{array}\right].$$ Then $\tilde{B_1} = B_1,\,\tilde{C_1}=  C_1 A,\, \tilde{C_2} = C_2,\, \tilde{D_1}= D_1,\, \tilde { D_2} = C_1 B_2,\,\text { and } \tilde{D_3} = D_2 $.\\
		Since $U_1$ is an isometry,
		$$
		U_1^*U_1 = \begin{bmatrix}
			0 & C_1^* \\
			B_1^* & D_1^*
		\end{bmatrix} \begin{bmatrix}
			0 & B_1\\
			C_1 & D_1
		\end{bmatrix} = \begin{bmatrix}
			I_{\mathcal{E}} & 0 \\
			0 & I_{\cH_1}
		\end{bmatrix}
		$$ which gives 
		\begin{align} \label{iso-1}
			C_1^*C_1 =I_{\mathcal{E}}, \quad
			C_1^* D_1 = 0 \quad \text{ and } \quad
			B_1^* B_1 + D_1^* D_1 = I_{\cH_1}.
		\end{align}
		Similarly, $U_2$ being an isometry
		\begin{align}\label{iso-2}
			A^*A + C_2^*C_2 = I_{\mathcal{E}},\quad
			A^* B_2 + C_2^*D_2 = 0 \quad\text{and}\quad
			& B_2^*B_2 + D_2^*D_2 = I_{\mathcal{H}_2}.
		\end{align}
		Now,
		\begin{align*}
			\tilde{C_1} A^{-2}\tilde{C_1}^* \tilde{D_2} &= ( C_1 A)A^{-2}(C_1 A)^*C_1B_2\\&= C_1 A^{-1}A^* C_1^* C_1 B_2\\&= C_1 B_2\\&=\tilde{D_2}\quad\left(\text{since}\,\, C_1^*C_1 =I_{\mathcal{E}} \,\,\text{and } A= A^* \right)
		\end{align*}
		Also, using \cref{iso-1}, we get the following
		$$ \tilde{C_1}^*\tilde{C_1} = (C_1 A)^*(C_1 A) = A^*A = A^2. $$
		Therefore, the isometry $U$ satisfies the condition \eqref{cond_1}.

		Conversely, suppose that there exist Hilbert spaces $\mathcal{H}_1,\mathcal{H}_2$ and a reducible unital $*-$representation $ \rho :C_b(\Psi) \to \cB \left(\cH_1 \oplus \cH_2\right)$.
		So, $\rho $ has the following form: 
		$$\rho (g) = \begin{bmatrix}
			\rho(g)|_{\cH_1} & 0 \\
			0 & \rho(g)|_{\cH_2}
		\end{bmatrix}
		: \mathcal{H}_1 \oplus \mathcal{H}_2 \to \mathcal{H}_1 \oplus \mathcal{H}_2$$ for each $g\in C_b(\Psi)$.
		Define, $$\rho_1(g) := 
		\rho(g)|_{\cH_1}  \ \text{and} \  \rho_2(g) := \rho(g)|_{\cH_2}  $$ for every $g\in C_b(\Psi)$.
		Then $\rho_1$ and $\rho_2$ are both unital $*-$ representations.
		Let $$
		U= \left[ \begin{array}{c| c c}
			0 & B_1 & 0 \\
			\hline
			C_1 & D_1 & D_2 \\
			C_2 & 0 & D_3
		\end{array} \right]:
		\cE \oplus(\mathcal{H}_1\oplus\mathcal{H}_2) \rightarrow \cE \oplus (\mathcal{H}_1\oplus\mathcal{H}_2) $$
		be an isometric colligation such that 
		$$ \theta(x)=  B \rho(E(x)) (I_{\cH_1 \oplus \cH_2}- D \rho (E(x)))^{-1} C,$$
		for all $x\in X$, where $$ B=\begin{bmatrix}
			B_1 & 0 
		\end{bmatrix},\quad C= \begin{bmatrix}
			C_1\\
			C_2
		\end{bmatrix}\quad\text{and}\quad D= \begin{bmatrix}
			D_1 & D_2 \\
			0 & D_3
		\end{bmatrix},$$
		that satisfies \cref{cond_1} for some self adjoint and invertible operator $A$. Set,
		$$
		U_1 = \left[ \begin{matrix}
			0 & B_1 \\
			C_1 A^{-1} & D_1
		\end{matrix} \right ]\quad\text{and}\quad
		U_2 = \left[ \begin{matrix}
			A & B_2 \\
			C_2 & D_3
		\end{matrix}\right ]
		$$
		where $B_2 = A^{-1} C_1^* D_2 $. Since $U$ is isometry,
		$$
		U^*U = \left[ \begin{matrix}
			C_1^*C_1 + C_2^*C_2 & C_1^*D_1 & C_1^*D_2 + C_2^*D_3 \\
			D_1^*C_1 & B_1^*B_1 + D_1^*D_1 & D_1^*D_2 \\
			D_2^*C_1 + D_3^*C_2 & D_2^*D_1 & D_2^*D_2 + D_3^*D_3
		\end{matrix} \right] 
		= \begin{bmatrix}
			I_{\cE} & 0 & 0 \\
			0 & I_{\mathcal{H}_1} & 0 \\
			0 & 0 & I_{\mathcal{H}_2}
		\end{bmatrix},
		$$
		which gives
		\begin{align*}
			&C_1^*D_1=0=D_1^*D_2,\quad C_1^*D_2 + C_2^*D_3= 0,\quad B_1^*B_1 + D_1^*D_1 = I_{\mathcal{H}_1},\quad
			D_2^*D_2 + D_3^*D_3 = I_{\mathcal{H}_2}\text{ and }
		\end{align*}
		$$ C_1^*C_1 + C_2^*C_2 = I_{\cE}\,\,\implies A^2 + C_2^*C_2 = I_{\cE}.$$
		Using the above relations and \cref{cond_1}, we have
		\begin{align*}
			B_2^*B_2 + D_3^*D_3 &= D_2^* C_1 A^{-2} C_1^* D_2 + D_3^*D_3\\
			&= D_2^* D_2 + D_3^* D_3 \\
			& = I_{\mathcal{H}_2},
		\end{align*}
		Thus
		$$U_1^*U_1 = \begin{bmatrix}
			A^{-1} C_1^*C_1 A^{-1} & A^{-1} C_1^*D_1 \\
			D_1^*C_1 A^{-1} & B_1^*B_1 +D_1^*D_1 
		\end{bmatrix}
		= \begin{bmatrix}
			I_{\cE} & 0 \\
			0 & I_{\mathcal{H}_1}
		\end{bmatrix}
		$$
		and 
		$$
		U_2^*U_2 = \begin{bmatrix}
			A^2 + C_2^*C_2 & A^* B_2 + C_2^*D_3 \\
			B_2^ *A + D_3^*C_2 & B_2^*B_2 + D_3^*D_3
		\end{bmatrix} 
		= \begin{bmatrix}
			I_{\cE} & 0 \\
			0 & I_{\mathcal{H}_2}
		\end{bmatrix}.
		$$
		As $U_1$ and $U_2$ are isometries, the operators 
		$$
		\tilde{U_1} := \left[\begin{array}{c| c c}
			0 & B_1 & 0 \\
			\hline 
			C_1 A^{-1} & D_1 & 0 \\
			0 & 0 & I_{\mathcal{H}_2}
		\end{array} \right] \,\,\, \text{and} \,\, 
		\tilde{U_2}:=
		\left[\begin{array}{c| c c}
			A & 0 & B_2 \\
			\hline 
			0 & I_{\mathcal{H}_1} & 0 \\
			C_2 & 0 & D_3 
		\end{array}\right]\quad
		$$
		on $ \cE \oplus (\mathcal{H}_1 \oplus \mathcal{H}_2)$ are isometries. Using the relation $C_1 A^{-1} B_2= D_2$, we get $\tilde{U_1}\tilde{U_2}=U$.
		Now,
		\begin{align*}
			\theta(x)&= [B_1,0] \begin{bmatrix}
				\rho_1(E(x)) & 0 \\
				0 & \rho_2(E(x))
			\end{bmatrix} \left( \begin{bmatrix}
				I_{\mathcal{H}_1} & 0 \\
				0 & I_{\mathcal{H}_2}
			\end{bmatrix}- \begin{bmatrix}
				D_1 & C_1 A^{-1}B_2 \\
				0 & D_3
			\end{bmatrix} \begin{bmatrix}
				\rho_1(E(x)) & 0 \\
				0 & \rho_2(E(x))
			\end{bmatrix}\right)^{-1} \begin{bmatrix}
				C_1\\
				C_2
			\end{bmatrix}\\
			&= [B_1 \rho_1(E(x)), 0 ] \begin{bmatrix}
				I_{\mathcal{H}_1} - D_1\rho_1(E(x)) & - C_1 A^{-1} B_2\rho_2(E(x)) \\
				0 & I_{\mathcal{H}_2}- D_3 \rho_2(E(x))
			\end{bmatrix}^{-1} \begin{bmatrix}
				C_1\\
				C_2
			\end{bmatrix}\\
			&=\psi_1(x) \psi_2(x)
		\end{align*}
		where, $$
		\psi_1(x)= B_1\rho_1(E(x))\left( I_{\mathcal{H}_1}- D_1\rho_1(E(x)) \right)^{-1} C_1 A^{-1}
		$$
		and $$
		\psi_2(x)=A + B_2\rho_2(E(x))\left( I_{\mathcal{H}_2}- D_3\rho_2(E(x)) \right)^{-1}C_2.
		$$

		Clearly, $\psi_1(x_0) =0$ and $\psi_2(x_0) = A $. And also, $U_1$ and $U_2$ are isometric colligations for $\psi_1$ and $\psi_2$, respectively. Therefore by \cref{Real-thm}, $\psi_1,\psi_2 \in \cS\cA_{\Psi}(\cE)$.
	\end{proof}
	
	Now we shall consider the case when $\psi_j(x_0)=0$ for $j=1,2$. The proof of the following theorem is more or less similar to the previous one. Hence, we omit the proof.
	\begin{thm}\label{Main2}
		Let $\theta \in \cS\cA_{\Psi}(\cE)$ with $\theta(x_0)=0$.Then, there exists $\psi_1,\psi_2 \in \cS\cA_{\Psi}(\cE)$ such that $\theta = \psi_1 \psi_2$ and $\psi_1(x_0)=0=\psi_2(x_0) $ if and only if there exist Hilbert spaces $\mathcal{H}_1,\mathcal{H}_2$, a reducible unital $*-$representation $\rho: C_b(\Psi) \rightarrow \cB\left(  \mathcal{H}_1 \oplus \mathcal{H}_2 \right) $ and an isometric colligation $$ 
		U= \left[ \begin{array}{c| c c}
			0 & B_1 & 0 \\
			\hline
			0 & D_1 & D_2 \\
			C_2 & 0 & D_3
		\end{array}\right] : \cE \oplus \left( \mathcal{H}_1 \oplus \mathcal{H}_2 \right) \to \cE \oplus \left( \mathcal{H}_1 \oplus \mathcal{H}_2 \right)
		$$
		such that $$ 
		\theta(x)= [B_1,0]\rho(E(x))\left( I_{\mathcal{H}_1 \oplus \mathcal{H}_2} - \left[\begin{smallmatrix}
			D_1 & D_2 \\
			0 & D_3
		\end{smallmatrix}\right]\rho(E(x)) \right)^{-1} \left[ \begin{smallmatrix}
			0\\
			C_2
		\end{smallmatrix}\right]
		$$
		with $L^*D_1 =0 $ and $D_2 = LY$ for some $Y\in \mathcal{B}(\mathcal{H}_2,\cE)$ and isometry $L$ on $\cH_2$.
	\end{thm}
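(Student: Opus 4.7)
My plan is to mirror the proof of \cref{Main1}, but adapt the colligation manipulations to the symmetric case in which both factors vanish at $x_0$. For the forward direction, I would start with a factorization $\theta=\psi_1\psi_2$ with $\psi_j(x_0)=0$ for $j=1,2$, and invoke \cref{Real-thm} on each factor to produce unital $*$-representations $\rho_j:C_b(\Psi)\to\cB(\cH_j)$ and isometric colligations
$$U_j=\left[\begin{array}{c|c}0 & B_j\\ \hline C_j & D_j\end{array}\right]:\cE\oplus\cH_j\to\cE\oplus\cH_j,$$
where the $(1,1)$ block vanishes because $\psi_j(x_0)=0$ and $E(x_0)=0$. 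Setting $\rho=\rho_1\oplus\rho_2$ on $\cH_1\oplus\cH_2$, I would form the product
$$U=\left[\begin{array}{c|cc}0 & B_1 & 0\\ \hline C_1 & D_1 & 0\\ 0 & 0 & I_{\cH_2}\end{array}\right]\left[\begin{array}{c|cc}0 & 0 & B_2\\ \hline 0 & I_{\cH_1} & 0\\ C_2 & 0 & D_2\end{array}\right]=\left[\begin{array}{c|cc}0 & B_1 & 0\\ \hline 0 & D_1 & C_1B_2\\ C_2 & 0 & D_2\end{array}\right],$$
which lands in precisely the prescribed block form. The isometry condition $U_1^*U_1=I$ yields $C_1^*C_1=I_\cE$ and $C_1^*D_1=0$, so taking $L:=C_1$ (an isometry from $\cE$ into $\cH_1$) and $Y:=B_2\in\cB(\cH_2,\cE)$ delivers $L^*D_1=0$ and $C_1B_2=LY$. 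A direct computation of the realization associated with $U$, as in the proof of \cref{Main1}, shows that its transfer function is $\psi_1(x)\psi_2(x)=\theta(x)$.

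For the converse, given an isometric colligation $U$ of the stated form, a reducible representation $\rho=\rho_1\oplus\rho_2$, an isometry $L$ with $L^*D_1=0$, and $Y\in\cB(\cH_2,\cE)$ with $D_2=LY$, I would split
$$\tilde U_1:=\left[\begin{array}{c|cc}0 & B_1 & 0\\ \hline L & D_1 & 0\\ 0 & 0 & I_{\cH_2}\end{array}\right],\qquad \tilde U_2:=\left[\begin{array}{c|cc}0 & 0 & Y\\ \hline 0 & I_{\cH_1} & 0\\ C_2 & 0 & D_3\end{array}\right].$$
From $U^*U=I$ I would extract the relations $C_2^*C_2=I_\cE$, $B_1^*B_1+D_1^*D_1=I_{\cH_1}$, $D_1^*D_2=0$, $C_2^*D_3=0$, and $D_2^*D_2+D_3^*D_3=I_{\cH_2}$; combined with $L^*L=I_\cE$ and $L^*D_1=0$, these are exactly what is required to verify $\tilde U_1^*\tilde U_1=I$. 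For $\tilde U_2$, the crucial computation is $Y^*Y=Y^*L^*LY=D_2^*D_2$, which gives $Y^*Y+D_3^*D_3=I_{\cH_2}$. The identity $LY=D_2$ then makes $\tilde U_1\tilde U_2=U$, so applying \cref{Real-thm} to the realizations built from $\tilde U_j$ yields $\psi_1,\psi_2\in\cS\cA_\Psi(\cE)$ with $\psi_j(x_0)=0$ and $\psi_1\psi_2=\theta$ after a block upper-triangular resolvent expansion identical to the one performed at the end of the proof of \cref{Main1}.

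The main obstacle is bookkeeping rather than genuine novelty: the sub-colligations $\tilde U_j$ must be honest isometries on the full space $\cE\oplus(\cH_1\oplus\cH_2)$ (which is why the identity blocks $I_{\cH_2}$ and $I_{\cH_1}$ appear), and the role played here by the isometry $L$ with $L^*D_1=0$ and $LY=D_2$ is precisely parallel to the role played in \cref{Main1} by $C_1A^{-1}$, which satisfied the analogue of these relations via the invertibility of $A$. Once this parallel is set up, the remaining verifications are routine and follow the template already laid out, justifying the author's omission.
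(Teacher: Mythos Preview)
Your argument is correct and is exactly the template the paper has in mind: since the authors omit the proof of \cref{Main2} with the remark that it is ``more or less similar'' to that of \cref{Main1}, your cascade product of the two realizations in the forward direction and your splitting $U=\tilde U_1\tilde U_2$ via the isometry $L$ in the converse direction reproduce precisely that scheme. The identification $L=C_1$, $Y=B_2$ in the forward direction and the verification $Y^*Y=Y^*L^*LY=D_2^*D_2$ in the converse are the natural analogues of the $C_1A^{-1}$ manipulations in \cref{Main1}, as you observe. One small point: the stated domain ``isometry $L$ on $\cH_2$'' in the theorem is evidently a slip for $L\in\cB(\cE,\cH_1)$, and your reading is the only one consistent with the equations $L^*D_1=0$ and $D_2=LY$.
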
In \cref{Main1} and \cref{Main2}, we assumed that $\theta(x_0)=0$. The following theorem characterizes the factorization of $\theta$ with out any assumption on $\theta(x_0)$.
	\begin{thm} \label{Main3}
		Let $\theta \in \cS\cA_{\Psi}(\cE)$ with $\theta(x_0) = A$. Then $\theta = \psi_1 \psi_2$ for some $\psi_1,\psi_2 \in \cS\cA_{\Psi}(\cE)$ if and only if  there exist Hilbert spaces $\mathcal{H}_1,\mathcal{H}_2$, a reducible unital $*-$representation $\rho: C_b(\Psi)\rightarrow \cB(\mathcal{H}_1 \oplus \mathcal{H}_2)$ and an isometric colligation $$
		U= \left[ \begin{array}{c| c c}
			A & B_1 & B_2 \\
			\hline
			C_1 & D_1 & D_2 \\
			C_2 & 0 & D_3
		\end{array}\right] :  \cE \oplus \left( \mathcal{H}_1 \oplus \mathcal{H}_2 \right) \to \cE \oplus \left( \mathcal{H}_1 \oplus \mathcal{H}_2 \right)$$ 
		such that $A=A_1 A_2$ and there exist operators $X_1$ and $Y_2$ satisfying
		 \begin{align} \label{cond-2} 
			 B_2 = A_1 Y_2, \ C_1= X_1 A_2, \ D_2=X_1 Y_2,\ A_1^* A_1 + X_1^* X_1 = I,
		\end{align}
		\begin{align}\label{cond-2.0}
			 A_2^* \text{ is injective on } \text{Range}(A_1^* B_1 + X_1^* D_1), 
		\end{align}		for some $ A_1,A_2 \in \cB(\cE)$ and
		\begin{align}\label{theta}
			\theta(x)= A + [B_1,B_2] \rho(E(x))\left( I_{\mathcal{H}_1 \oplus \mathcal{H}_2} - \begin{bmatrix}
				D_1 & D_2\\
				0 & D_3
			\end{bmatrix} \rho(E(x)) \right)^{-1} \begin{bmatrix}
				C_1 \\
				C_2
			\end{bmatrix}. 
		\end{align} 
	\end{thm}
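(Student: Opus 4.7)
The proof will follow the same template as \cref{Main1}, adapted to allow $\theta(x_0)=A\neq 0$. The key difference is that neither factor need vanish at $x_0$: if $\theta=\psi_1\psi_2$ with $\psi_j(x_0)=A_j$, then $A=A_1A_2$ and we must track how the two realizations intertwine through a nontrivial product.

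For the forward direction, I would invoke \cref{Real-thm} separately for $\psi_1$ and $\psi_2$ to obtain unital $*$-representations $\rho_j:C_b(\Psi)\to\cB(\cH_j)$ and isometries $U_j=\left[\begin{smallmatrix}A_j & B_j'\\ C_j' & D_j'\end{smallmatrix}\right]$ realizing them, then take $\rho=\rho_1\oplus\rho_2$ on $\cH_1\oplus\cH_2$ (clearly reducible). Mimicking the block-triangular product from \cref{Main1}, one writes $U=\tilde U_1\tilde U_2$ where $\tilde U_1,\tilde U_2$ are inflations of $U_1,U_2$ to $\cE\oplus(\cH_1\oplus\cH_2)$ acting nontrivially on their respective summands; $U$ is then isometric, and a direct expansion of $(I-D\rho(E(x)))^{-1}$ using the block-triangular form of $D$ reproduces the product $\psi_1(x)\psi_2(x)=\theta(x)$. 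Reading off the blocks yields $A=A_1A_2$, $B_1=B_1'$, $C_2=C_2'$, $D_1=D_1'$, $D_3=D_2'$, and with $X_1:=C_1'$, $Y_2:=B_2'$ one has $B_2=A_1Y_2$, $C_1=X_1A_2$, $D_2=X_1Y_2$. The relation $A_1^*A_1+X_1^*X_1=I$ comes from the first column of $U_1^*U_1=I$; while $A_1^*B_1+X_1^*D_1=0$ is exactly the off-diagonal relation in $U_1^*U_1=I$, which makes the range in \eqref{cond-2.0} equal to $\{0\}$, so that condition holds trivially.

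For the converse, the natural definitions are $U_1=\left[\begin{smallmatrix}A_1 & B_1\\ X_1 & D_1\end{smallmatrix}\right]$ and $U_2=\left[\begin{smallmatrix}A_2 & Y_2\\ C_2 & D_3\end{smallmatrix}\right]$. The main computation is to verify each is an isometry by expanding $U^*U=I$ and substituting the identifications in \eqref{cond-2}. This yields six block equations; four of them ($A_2^*A_2+C_2^*C_2=I$, $A_2^*Y_2+C_2^*D_3=0$, $Y_2^*Y_2+D_3^*D_3=I$ for $U_2$, and $B_1^*B_1+D_1^*D_1=I$ for $U_1$) follow directly using $A_1^*A_1+X_1^*X_1=I$. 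The remaining identity $A_1^*B_1+X_1^*D_1=0$ needed for $U_1$ is exactly where the injectivity hypothesis \eqref{cond-2.0} is used: the $(1,2)$-entry of $U^*U=I$ gives $A^*B_1+C_1^*D_1=A_2^*(A_1^*B_1+X_1^*D_1)=0$, and injectivity of $A_2^*$ on the range of $A_1^*B_1+X_1^*D_1$ forces this expression to vanish.

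Once $U_1,U_2$ are known to be isometric, \cref{Real-thm} produces $\psi_1,\psi_2\in\cS\cA_\Psi(\cE)$ with $\psi_1(x_0)=A_1$, $\psi_2(x_0)=A_2$. The final step is the algebraic verification $\theta=\psi_1\psi_2$: expanding the $2\times 2$ block inverse $(I-D\rho(E(x)))^{-1}$ in \eqref{theta} using the block-triangular structure of $D$ and the decomposition $\rho=\rho_1\oplus\rho_2$ (with $\rho_j$ the restrictions to the reducing subspaces $\cH_j$) produces four terms that match precisely the expansion of $\psi_1(x)\psi_2(x)=[A_1+B_1\rho_1(E(x))(I-D_1\rho_1(E(x)))^{-1}X_1][A_2+Y_2\rho_2(E(x))(I-D_3\rho_2(E(x)))^{-1}C_2]$ after using $A=A_1A_2$, $B_2=A_1Y_2$, $C_1=X_1A_2$, $D_2=X_1Y_2$. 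The main subtlety is the appearance of \eqref{cond-2.0}, which is the only nontrivial obstacle to reconstructing the isometry $U_1$ from the given data; the rest is the parallel product-of-colligations calculation already carried out in \cref{Main1}.
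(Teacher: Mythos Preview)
Your proposal is correct and follows essentially the same approach as the paper: in both directions you form the product of the inflated colligations $\tilde U_1\tilde U_2$, read off the identifications $X_1=C_1'$, $Y_2=B_2'$, and in the converse you define $U_1=\left[\begin{smallmatrix}A_1 & B_1\\ X_1 & D_1\end{smallmatrix}\right]$, $U_2=\left[\begin{smallmatrix}A_2 & Y_2\\ C_2 & D_3\end{smallmatrix}\right]$ and verify they are isometries using $U^*U=I$, \eqref{cond-2}, and the injectivity hypothesis \eqref{cond-2.0} precisely at the off-diagonal entry of $U_1^*U_1$. The only cosmetic difference is that the paper writes out the six block identities more explicitly (and $B_1^*B_1+D_1^*D_1=I_{\cH_1}$ comes straight from $U^*U=I$ without invoking $A_1^*A_1+X_1^*X_1=I$), but the argument is the same.
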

	
	\begin{proof}
		Suppose $\psi_1,\psi_2 \in \cS\cA_{\Psi}(\cE) $ such that $\theta = \psi_1 \psi_2$. Then by \cref{Real-thm}, there exist Hilbert spaces $\mathcal{H}_1,\mathcal{H}_2$ and a unital $*-$representations $\rho_j: C_b(\Psi) \rightarrow \cB(\cH_j)$ for $j=1,2$, acting on $\mathcal{H}_1$ and $\mathcal{H}_2$ respectively with isometric colligations $U_1,U_2$ as follows:
		$$ U_j= \begin{bmatrix}
			A_j & B_j \\
			C_j & D_j
		\end{bmatrix}:\cE \oplus \mathcal{H}_j \to \cE \oplus \mathcal{H}_j
		$$ such that $$ 
		\psi_j(x)= A_j + B_j\rho_j(E(x))\left( I_{\mathcal{H}_j} - D_j\rho_j(E(x))\right)^{-1}C_j
		$$ for $j=1,2.$
		Define $$
		U = \left[\begin{array}{c| c c}
			A_1 & B_1 & 0\\
			\hline
			C_1 & D_1 & 0 \\
			0 & 0 & I_{\mathcal{H}_2}
		\end{array}\right] \left[\begin{array}{c| c c}
			A_2 & 0 & B_2 \\
			\hline
			0 & I_{\mathcal{H}_1} & 0  \\
			C_2 & 0 &  D_2 \\
		\end{array}\right] = \left[\begin{array}{c|c c}
			A_1A_2 & B_1 & A_1B_2 \\
			\hline
			C_1 A_2 & D_1 & C_1B_2 \\
			C_2 & 0 & D_2
		\end{array}\right]
		$$
		and $$
		\rho(g) = \begin{bmatrix}
			\rho_1(g) & 0 \\
			0 & \rho_2(g)
		\end{bmatrix} : \cH_1 \oplus \cH_2 \to  \cH_1 \oplus \cH_2 $$ for each $g \in C_b(\Psi)$.
		Clearly, $U$ is an isometry and $\rho$ is a representation of $C_b(\Psi)$. A straightforward computation, as we did earlier, gives that the transfer function realization corresponding to the isometry $U$ and the representation $\rho$ is $\theta$.
		Also, it is easy to check that the isometry $U$ satisfies condition \eqref{cond-2} and \eqref{cond-2.0}.
		
		Conversely, suppose that there exist Hilbert spaces $\mathcal{H}_1,\mathcal{H}_2$, a reducible unital $*-$representation $\rho: C_b(\Psi)\rightarrow \cB(\mathcal{H}_1 \oplus \mathcal{H}_2)$ and $g\in C_b(\Psi)$  and an isometric colligation $$
		U= \left[ \begin{array}{c| c c}
			A & B_1 & B_2 \\
			\hline
			C_1 & D_1 & D_2\\
			C_2 & 0 & D_3
		\end{array}\right] : \cE \oplus \left( \mathcal{H}_1 \oplus \mathcal{H}_2 \right) \to \cE \oplus \left( \mathcal{H}_1 \oplus \mathcal{H}_2 \right)
		$$ satisfying \eqref{cond-2}and \eqref{cond-2.0} such that $\theta(x)$ is as in equation \eqref{theta}.
		By our assumption, we can write $$
		\rho(g) = \left[\begin{smallmatrix}
			\rho_1(g)|_{\cH_1} & 0 \\
			0 & \rho_2(g)|_{\cH_2}
		\end{smallmatrix}\right] $$ on $\mathcal{H}_1 \oplus \mathcal{H}_2$ for $g \in C_b(\Psi)$. Set
		$$
		U_1 = \begin{bmatrix}
			A_1 & B_1 \\
			X_1  & D_1 
		\end{bmatrix} \quad \text{and} \quad
		U_2 = \begin{bmatrix}
			A_2 & Y_2 \\
			C_2 & D_3 
		\end{bmatrix}.
		$$
		Now, we shall prove that $U_1$ and $U_2$ are isometries. Since $U$ is an isometry,  we have
		\begin{align}\label{eq-1}
			A^*A + C_1^*C_1 + C_2^*C_2 = I_{\cE}, \quad  A^* B_1 + C_1^*D_1 = 0 , \quad  A^*B_2 + C_1^*D_2+ C_2^*D_3 =0 
		\end{align}
		\begin{align}\label{eq-2}
			B_1^*B_1 + D_1^*D_1=I_{\mathcal{H}_1},\quad B_1^*B_2 + D_1^*D_2=0 ,\quad B_2^*B_2+ D_2^*D_2 + D_3^*D_3 = I_{\mathcal{H}_2}.
		\end{align}
		A simple computation gives that,$$
		U_1^*U_1 = \begin{bmatrix}
			A_1^*A_1 + X_1^* X_1  & A_1^*B_1 + X_1^*D_1 \\
			B_1^* A_1 + D_1^*X_1 &  B_1^*B_1 + D_1^*D_1
		\end{bmatrix}$$
		and 
		$$
		U_2^*U_2 = \begin{bmatrix}
			A_2^*A_2 + C_2^*C_2 & A_2^* Y_2+ C_2^*D_3 \\
			Y_2^* A_2 + D_3^*C_2 & Y_2^* Y_2 + D_3^*D_3
		\end{bmatrix}.
		$$

By our assumption, the $(1,1)$ entry in $U_1^*U_1$ is the same as $I_{\cE}$. From the second relation in \eqref{eq-1} can be written as
$A_2^*(A_1^*B_1 + X_1^* D_1)=0$ which implies $A_1^*B_1 + X_1^* D_1=0$ since $A_2^*$ is injective on $\text{Range}(A_1^*B_1 + X_1^* D_1)$. So, the $(1,2)$ entry and $(2,1)$ entry of $U_1^*U_1$ are $0$. Also, from equation \eqref{eq-2}, the $(2,2)$ entry is $I_{\cH_1}$. 

Now, from the first relation in $\eqref{eq-1}$, we have $$A_2^*(A_1^*A_1 + X_1^* X_1)A_2 + C_2^*C_2 = I $$ as $C_1 = X_1 A_2 $ and therefore,
$A_2^*A_2 + C_2^* C_2 = I$ by using \eqref{cond-2}. So, the $(1,1)$ entry of $U_2^*U_2$ is $I_{\cE}$. Equation \eqref{eq-1} together with equation \eqref{cond-2} gives the following
\begin{align*}
	0 &= A_2^*A_1^* A_1 Y_2 + A_2^* X_1^* X_1 Y_2 + C_2^*D_3 \\&= A_2^*(A_1^* A_1 + X_1^* X_1) Y_2 + C_2^*D_3\\&= A_2^* Y_2 + C_2^*D_3 .
\end{align*}
		
 Hence the $(1,2)$ and $(2,1)$ entries of $U_2^*U_2$ are zero. Further, the last equation in \eqref{eq-2} and the relations in \eqref{cond-2} implies that $$I = Y_2^*(A_1^*A_1 + X_1^* X_1) Y_2 + D_3^* D_3 = Y_2^* Y_2 + D_3^* D_3.$$ Therefore, the $(2,2)$ entry of $U_2^*U_2$ is $I_{\mathcal{H}_2}$. 	
		
Hence $U_1$ and $U_2$ are both isometries, which in turn implies that the operators
		$$ \tilde{U_1}= \left[\begin{array}{c| c c}
			A_1 & B_1 & 0\\
			\hline
			X_1 & D_1 & 0 \\
			0& 0 & I_{\mathcal{H}_2} 
		\end{array}\right] \quad \text{and}\quad \tilde{U_2}=\left[\begin{array}{c| c c}
			A_2 & 0 & Y_2\\
			\hline
			0 & I_{\mathcal{H}_1}&  0 \\
			C_2& 0 & D_3 
		\end{array}\right].
		$$
		are isometries on $\cE \oplus \left( \mathcal{H}_1 \oplus \mathcal{H}_2 \right)$.
		Let $\psi_1$ and $\psi_2$ be the transfer function realizations corresponding to the pairs ($\tilde{U_1}$ , $\rho_1 $) and  ($\tilde{U_2}$, $  \rho_2 $) respectively, where
		$$\rho_1 (g)= 
		\begin{bmatrix}
		\rho(g)|_{\cH_1} & 0\\ 0 & I_{\cH_2}
		\end{bmatrix}
		 \ \text{and}\ \rho_2(g)=\begin{bmatrix}
		 	I_{\cH_1} & 0\\ 0 & \rho(g)|_{\cH_2}
		 \end{bmatrix}
		 $$ 
		 for $g\in C_b(\Psi)$. A similar computation as in the proof of \cref{Main1} gives that 
		$$
		\theta(x)= \psi_1(x) \psi_2(x) \quad \text{for all } x \in X.
		$$
		This completes the proof.
	\end{proof}
	
	Note that in the case of $\bD^d$ the reducibility condition on the representation is automatically satisfied as the co-ordinate functions are the test functions.

	\textbf{Examples:} There are some other domains: the symmetrized bidisc \cite{Tirtha-Hari-JFA} and multi-connected domains \cite{BH, DM}, where the collections of test functions are known. Moreover, in these examples, the test functions are holomorphic. However, in both these cases, the number of such functions is uncountable. For the sake of brevity, we refrain from writing them explicitly. Nevertheless, we would like to emphasize that the test functions in these domains are certain inner functions. For more details on inner functions, see \cite{Bhowmik-Kumar} for the symmetrized bidisc and \cite{Fisher} for multi-connected domains.

	\begin{remark}
		It is worth noting that we have the test functions discussed above for the bidisc, the symmetrized bidisc, and the annulus, such that the $\Psi-$Schur-Agler class coincides with the Schur class functions. Consequently, we establish the factorization for the Schur class functions. 
	\end{remark}
	
\noindent\textbf{A comment on the extreme points:} Suppose $\Omega$ is either $\mathbb{D}$ or $\mathbb{D}^2$. If $\theta $ is a scalar-valued Schur class function on $\Omega$ and $\theta(0)=0$ such that there exists an isometric colligation operator $U$ satisfying the conditions in \cref{Main3}, then $\theta$ can be factorized as $\psi_1 \psi_2$ where $\psi_1, \psi_2 $ are Schur class functions on $\Omega$. So, Forelli's theorem implies that the Cayley transform of $\theta$ is not an extreme point of the normalized Herglotz class functions $\cN(\Omega, 0)$ provided $\psi_1$ and $\psi_2$ are non-constant. In general, determining all the extreme points of $\cN(\bD^2, 0)$ is a very difficult problem and it is still open. See for example \cite{Knese}. So, our findings assist in excluding certain normalized Herglotz class functions from being considered as extreme points.

We shall end this article with the following remark:
\begin{remark}
Suppose we are in the classical setup. Then note that our factorization results apply to any Schur class function. The set of all inner functions is a subclass of the Schur class functions. Recently Curto, Hwang, and Lee ( \cite{Curto2, Curto1}) have  studied operator-valued functions, with a particular focus on operator-valued inner functions and their factorization. It's natural to ask if there is any connection between these two sets of factorization results. However, exploring this connection requires a better understanding of factorization results for inner functions within our setup. We defer this inquiry to future research.
\end{remark}

\textbf{Funding:} The first author's work is supported by the Prime Minister's Research Fellowship PMRF-21-1274.03 and the second author is partially supported by a PIMS postdoctoral fellowship.

\textbf{Acknowledgement:} The authors  are thankful to their research supervisor Prof. Tirthankar Bhattacharyya for some discussions.  The authors want to thank the anonymous referees for their helpful comments and suggestions that truly improved the paper.

\end{document}